\newcommand{\cl}[1]{{\mathcal{#1}}}
\numberwithin{equation}{section}
\theoremstyle{plain}
\newtheorem{lem}{Lemma}[section]
\newtheorem{thm}[lem]{Theorem}
\newtheorem{cor}[lem]{Corollary}
\newtheorem{pro}[lem]{Proposition}
\newtheorem{defn}[lem]{Definition}
\theoremstyle{definition}
\theoremstyle{remark}
\newtheorem{rem}[lem]{Remark}
\def\cl{{\mathcal L}}
\def\cam{{\mathcal M}}
\def\cas{{\mathcal S}}
\def\bc{{\mathbb C}}
\def\bn{{\mathbb N}}
\def\br{{\mathbb R}}
\def\bt{{\mathbb T}}
\def\bz{{\mathbb Z}}
\def\a{\alpha}
\def\eps{\varepsilon}
\def\b{\beta}
\def\d{\delta}
\numberwithin{equation}{section}
\def\cl{{\mathcal L}}
\def\cam{{\mathcal M}}
\def\cas{{\mathcal S}}
\def\bc{{\mathbb C}}
\def\bn{{\mathbb N}}
\def\br{{\mathbb R}}
\def\bt{{\mathbb T}}
\def\bz{{\mathbb Z}}
\def\a{\alpha}
\def\eps{\varepsilon}
\def\b{\beta}
\def\d{\delta}
\def\s{\sigma}
\begin{document} 

\title{Commutator Inequalities via Schur Products}
\author[E.~Christensen]{Erik Christensen}
\address{\hskip-\parindent
Erik Christensen, Institute for Matematicak Sciences, University of Co-penhagen, Denmark.}
\email{echris@math.ku.dk}
\subjclass[2010]{ Primary:  47D06, 81S05. Secondary: 46L55, 58B34.}
\date{\today}

\begin{abstract}
 For a self-adjoint unbounded operator $D$ on a Hilbert space $H,$
a bounded operator $y$ on $H$ and some  Borel functions $g(t)$ we establish inequalities of the type $$\|[g(D),y]\| \leq A_0\|y \| + A_1\|[D,y]\| + \dots + A_n\|[D, [D, \dots [D, y]\dots ]]\|.$$ 
The proofs take place in a space of infinite matrices with operator entries, and in this setting it is possible to approximate  the matrix associated to  $[g(D), y] $ by  the Schur product of a  matrix approximating $[D,y] $ and a scalar matrix.  A classical inequality on the norm of Schur products  may then be applied to obtain the results.
\end{abstract}

\maketitle

\section{Introduction}
\label{intro}
Let $D$ be an unbounded self-adjoint operator on a Hilbert space $H,$ then for a bounded operator $y$ on $H$ the  commutator $[D,y]$ may be of interest for several reasons. From our personal point of view, we have previously studied {\em mathematical physics, noncommutative geometry} and {\em operator algebras} and found that commutators do occur for natural reasons. The most common one being that the expression $[D,y] := Dy-yD$ denotes the derivative of the function $t \to -ie^{itD}ye^{-itD}$ at $t=0.$ It is well known \cite{AMG}, that this derivative may or may not exist, and if it exists, it may be as a limit of difference quotients taken in the norm topology or as a limit in the ultraweak toplogy on $B(H),$ and it follows from \cite{RP} and \cite{JaN}  that the ultraweak limit may exist in cases  where the uniform limit does not exist. If the ultraweak limit exists, we say that $y$ is weakly-$D$ differentiable and the bounded operator which is the limit is denoted $\d_D(y),$ 
 and if no mistakes are possible we will just write $\d(y).$ Following \cite{AMG} we let $C^1(D)$ denote the algebra of weakly $D-$differentiable operators on $H, $ and in the article   \cite{EC1} we gave a presentation of some of the equivalent formulations of weak $D-$differentiability. Our personal contribution in this direction 
 is based on a study of a space of infinite matrices with bounded operators as entries, and we showed that for any bounded operator $y$  the matrix associated with $[D,y]$ is always defined in this set up, but it represents a bounded operator exactly when $y$ is weakly $D-$differentiable. This set up makes it possible to define all the algebraic operations needed to work with higher commutators, although the higher commutators may not exist as densely defined operators. In the article \cite{EC2} we continued this work by studying higher commutators such as $[D,[D,\dots ,[D,y]\dots]]$ and we showed - among other things - 
that an operator $y$ is $k-$ times weakly $D-$differentiable if and only if all the matrix commutators $[D, [D, \dots[D,y]\dots ]]$ up to the order $k$ represent bounded operators.

Based on a question coming from {\em noncommutative geometry} and a question coming from the study of {\em perturbations of operator algebras } we got interested in relations between different commutators applied to the same bounded operator $y.$ To be more precise, the question from non commutative geometry asks for relations between the commutators $[|D|, y|$ and $[D,y],$ where we define for $D$ its numerical value $|D|$ by  $|D| := (D^2)^{(1/2)}.$ We show that for any natural number $n$ and a bounded operator $y$ on $H,$ that if  $y$ is $n+1$ times weakly $D-$differentiable then it is  $n$ times weakly $|D|-$differentiable, and we obtain a norm estimate for the norm of the $n'$th derivative of $y$ with respect to $|D|$ expressed in terms of norms of the $n+1$ first derivatives of $y$ with respect to $D.$  These results are presented in Section \ref{Holder}.

The question raised by the perturbation of operator algebras dealt with the question whether for a positive operator $D$, {\em almost  commutation }  with a bounded operator $y$ is inherited by the square root $D^{(1/2)}.$ It turned out that some results of this type are known, see for instance \cite{GKP}, so even though the set up we use in this article can be used to obtain some results of this type, we can not produce inequalities as sharp as the known ones, so our  results of this sort are not included in this article.

 It turns out that for some complex Borel functions $g(t)$ on $\br$  the relations between the  commutators $[g(D), y]$ and $[D,y]$ may,  approximately,  be expressed as a Schur product, and this is the basic observation, which is behind  the results in this article,  as described in Section \ref{basic}. The Schur  product between scalar matrices $A=(a_{ij}) $ and $B=(b_{ij})$  is simply the element-wise product  $(a_{ij})*(b_{ij}) = ( a_{ij}b_{ij} )$ of the   two matrices. With the kind help of Vern Paulsen we found a result by Bennett \cite{GB} on the norm of a Schur product of two scalar matrices, which we can use in our computations, and in Section \ref{ectSchur} we have modified Bennett's result from the setting of scalar matrices to matrices of operators and in this way we have found solutions to the problems mentioned above and also to some   general results of the type  $$\|[g(D),y]\| \leq A_0\|y\| +A_1\|[D,y]\| + \dots + A_n\|[D, \dots [D, y] \dots ]\|,$$ 
which hold, when  $g(t)$ is a Borel function such that 
$$ \exists \a> 0, \, A \geq 0, \, B \geq 0\, \forall s, t \in \br : \quad |g(s) - g(t)| \leq A + B |s-t|^\a.$$ This applies for instance to the cases when  $g(t) $ is bounded or $g(t)$ is H{\"o}lder continuous or $g(t)$ is absolutely continuous with a derivative which may be written as a sum of an integrable function and an essentially bounded function.  We present our abstract   results in  Section  \ref{Holder}, where we also give applications to the cases where $g(t) = \log(t)$ and $g(t) = |t|.$

\section{ Schur products, row- and column-bounded matrices.} \label{ectSchur}
This section contains a single result, which is an {\em operator space version}  of a result by Bennett, \cite{GB} Theorem 1.1 point (i). Given a couple of  scalar matrices $A = (a_{ij})_{(i,j) \in I \times J}$     and  $B= (b_{ij})_{(i,j) \in I \times J}$  over the same pair of sets of indices $I$ and  $J,$ then   the Schur product $C:= A*B$ is defined as  the matrix, of the same size, formed as the  products of the entries, $c_{ij} := a_{ij}b_{ij}.$  Bennett thought of  such   matrices over the scalars as operators between spaces $\ell^p(J)$ and  $\ell^q(I).$ In particular he proved that if the matrix $B$ corresponds to a bounded operator in $B(\ell^2(J), \ell^{\infty}(I))$ and the matrix $A$ corresponds to a bounded operator in $B(\ell^1(J), \ell^{2}(I)),$ then $A*B$ is a bounded operator in $B(\ell^2(J), \ell^2(I)).$ 
The theory of Schur multipliers has been extended to the more general setting of operator spaces and operator modules, see for instance Paulsen's book \cite{VP} and the books by Pisier \cite{GP1} and  \cite{GP2}.   A closer look at the definition of the Schur product - $c_{ij} = a_{ij}b_{ij} $ - immediately tells that the product  may be meaningful in  many different  situations, such as the case when all the elements $a_{ij}$ and $b_{ij} $ are  operators on the same Hilbert space, but also in the quite unrelated situation where the elements $a_{ij}$ are scalars and the  $b_{ij} $ are elements in a family $B_{ij}$ of Banach spaces.  We are sure that we have not found all the possible generalizations of Bennett's theorem to the setting of operator spaces, but we have searched and found  2 versions, which are   useful for this article and for our general interests.
 
The language we use is based on the words {\em column- and row-bounded } matrices of operators, which we define below.

The first version is formed as a statement on infinite matrices over the algebra of bounded operators algebras on a Hilbert space $H.$ This result  will not be used directly in this article, but we find quite easily  a corollary, which we will use  several times, in the sections to come.

\begin{defn}
Let $J$ be an index set and $H$ a Hilbert space, the set of all matrices $S= (s_{ij})_{(i,j \in J)}$ with entries   $s_{ij} \in B(H)$ is denoted $M_J(B(H)).$ 
\begin{itemize}
\item[(i)] 
 \end{itemize} 
A matrix $S= (s_{ij})$ in $M_J(B(H))$  is said to be bounded if it represents a bounded operator in $B(H \otimes \ell^2(J)).$ If $S$ is bounded, the term $\|S\| $ means that norm.  
\begin{itemize}
\item[(ii)] 
 \end{itemize} 
A matrix $S= (s_{ij})$ in $M_J(B(H))$  is said to be row bounded if there exists a $K \geq 0$ such that  $$ \forall i \in J \, \forall J_0 \text{ finite } \subseteq J: \, \| \sum_{j \in J_0} s_{ij}s^*_{ij} \| \leq K^2.$$
The row norm $\|S\|_r$ is defined as the least possible such $K.$  
\begin{itemize}
\item[(iii)] 
 \end{itemize} 
A matrix $S= (s_{ij})$ in $M_J(B(H))$  is said to be  column  bounded if there exists a $K \geq 0$ such that  $$ \forall j \in J \, \forall J_0 \text{ finite } \subseteq J: \, \| \sum_{i \in J_0} s_{ij}^*s_{ij} \| \leq K^2.$$ 
The column norm $\|S\|_c$ is defined as the least possible such $K.$  
\begin{itemize}
\item[(iv)] 
 \end{itemize} 
Let $K=(k_{ij}) $ and  $L= (l_{ij})$ be matrices in $M_J(B(H))$ their Schur product is the matrix $K*L$ in $M_J(B(H))$ defined by $(K*L)_{ij}:= k_{ij}l_{ij} .$
\end{defn} 

In this setting our version of  Bennett's result becomes 
\begin{thm} \label{schur}
Let $K=(k_{ij}) $ and  $L= (l_{ij})$ be matrices in $M_J(B(H))$ such that $K$ is row bounded and $L$ is column bounded, then $K*L $ is bounded in $M_J(B(H))$ and $\|K*L\| \leq \|K\|_r\|L\|_c.$  
\end{thm}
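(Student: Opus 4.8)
The plan is to bound the sesquilinear form $(\xi,\eta)\mapsto\langle(K*L)\xi,\eta\rangle$ on finitely supported vectors directly, by two nested applications of the Cauchy--Schwarz inequality; this is precisely the operator-valued form of the classical Schur test underlying Bennett's result \cite{GB}, and it is what makes $K*L$ bounded even though neither $K$ nor $L$ need represent a bounded operator. A matrix $S\in M_J(B(H))$ represents a bounded operator of norm at most $c$ exactly when $|\langle S\xi,\eta\rangle|\le c\,\|\xi\|\,\|\eta\|$ for all finitely supported $\xi,\eta\in H\otimes\ell^2(J)$, so it suffices to establish this inequality with $c=\|K\|_r\|L\|_c$ for $S=K*L$.

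So let $\xi=(\xi_j)_j$ and $\eta=(\eta_i)_i$ be finitely supported vectors in $H\otimes\ell^2(J)$. Then the sum
\[
\langle(K*L)\xi,\eta\rangle=\sum_{i,j}\langle k_{ij}l_{ij}\xi_j,\eta_i\rangle=\sum_{i,j}\langle l_{ij}\xi_j,\,k_{ij}^*\eta_i\rangle
\]
has only finitely many nonzero terms. Applying Cauchy--Schwarz in $H$ to each term, and then Cauchy--Schwarz to the resulting finite double sum, gives
\[
|\langle(K*L)\xi,\eta\rangle|\le\Bigl(\sum_{i,j}\|l_{ij}\xi_j\|^2\Bigr)^{1/2}\Bigl(\sum_{i,j}\|k_{ij}^*\eta_i\|^2\Bigr)^{1/2}.
\]

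It remains to estimate the two factors using the hypotheses. Summing the first over $i$ before $j$ and using $\sum_i\|l_{ij}\xi_j\|^2=\langle(\sum_i l_{ij}^*l_{ij})\xi_j,\xi_j\rangle\le\|L\|_c^2\|\xi_j\|^2$ (column-boundedness of $L$, applied with a finite subset of $J$ containing the relevant indices) yields $\sum_{i,j}\|l_{ij}\xi_j\|^2\le\|L\|_c^2\|\xi\|^2$. Symmetrically, $\|k_{ij}^*\eta_i\|^2=\langle k_{ij}k_{ij}^*\eta_i,\eta_i\rangle$, so summing over $j$ before $i$ and invoking row-boundedness of $K$ gives $\sum_{i,j}\|k_{ij}^*\eta_i\|^2\le\|K\|_r^2\|\eta\|^2$. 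Combining the three displays, $|\langle(K*L)\xi,\eta\rangle|\le\|K\|_r\|L\|_c\,\|\xi\|\,\|\eta\|$, which is exactly the required estimate; in particular $K*L$ is bounded with $\|K*L\|\le\|K\|_r\|L\|_c$.

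The argument is short, and the only point I expect to require genuine care is the opening reduction: aligning the paper's definition of a matrix "representing a bounded operator in $B(H\otimes\ell^2(J))$" with the assertion that this is detected by testing against finitely supported vectors (equivalently, that it is equivalent to uniform boundedness of the finite compressions, which do not increase $\|\cdot\|_r$ or $\|\cdot\|_c$ since the defining suprema already range only over finite subsets). The analytic heart — the double Cauchy--Schwarz step together with the two rearrangements of the scalar sums — then presents no real obstacle.
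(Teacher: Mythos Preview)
Your proof is correct and follows essentially the same Cauchy--Schwarz/Schur-test approach as the paper. The only cosmetic difference is that the paper estimates $\|(K*L)\xi\|^2=\sum_i\|\sum_j k_{ij}l_{ij}\xi_j\|^2$ directly (using row-boundedness of $K$ for each fixed $i$, then column-boundedness of $L$ after swapping the order of summation), whereas you dualize and bound the sesquilinear form $\langle(K*L)\xi,\eta\rangle$; both unwind to the same two inequalities.
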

\begin{proof}
 Let $J_0 \subseteq J $ be a finite set and $ \xi =(\xi_j)_{(j \in J_0)} $ be in $\ell^2(J_0, H),$  then we will  estimate the sum of non negative terms $$ \sum_{i \in J}\|\sum_{j \in J_0} k_{ij}l_{ij} \xi_j\|^2$$ by first fixing $i$ and use the row boundedness of $K $ to see that
$$ \|\sum_{j \in J_0} k_{ij}l_{ij} \xi_j\|^2 \leq \|K\|_r^2 \sum_{j \in J_0}\|l_{ij}\xi_j\|^2. $$ Since we are summing non-negative reals, we may change the order of summation so we get from the last inequality, when we do the summation over $i$ first, the following inequality
\begin{align} \sum_{i \in J}\|\sum_{j \in J_0}k_{ij}l_{ij}\xi_j\|^2 &\leq \|K\|_r^2 \sum_{j \in J_0}\sum_{i \in J}\|l_{ij}\xi_j\|^2 \nonumber \\ &\leq \|K\|_r^2 \|L\|_c^2\sum_{j \in J_0}\|\xi_j\|^2 \nonumber \\ &= \|K\|_r^2 \|L\|_c^2\|\xi\|^2, \nonumber 
\end{align}
  and the theorem follows.  
\end{proof}
\begin{rem}
Matrices may be indexed by different indices for columns and rows, say $I$ and $J.$ The theorem above applies to this situation too, since such matrices embed naturally into {\em square matrices} over the set $I \cup J.$ \end{rem}

The proof above applies to a situation which looks  quite different. We will now look at a Hilbert space $ H$ and a family $(e_j)_{(j \in J)}$ of pairwise orthogonal projections in $B(H)$ with strong operator sum $I_{B(H)}$. Then we define $\cam$ as the linear space of all matrices $A = (a_{ij})_{(i,j \in J)} $ and $a_{ij} \in e_iB(H)e_j.$ The space $\cas$ is defined as all the square scalar matrices over the index set $J.$ A matrix $S= (s_{ij})$ in $\cas$ is said to be bounded if it is the matrix of a bounded operator on $\ell^2(J, \bc),$ and similarly a matrix $A = (a_{ij})$ in $\cam$ is bounded if there exists a bounded operator $y$ on $H,$ such that for each pair $i,j$ we have $a_{ij} = e_iye_j.$ The definitions of row- and column boundedness of $S$ and $A$ are made in the obvious way. Given a matrix $S =(s_{ij})$ in $\cas$ and a matrix $ A =(a_{ij}) $ in $\cam,$ their Schur product $S*A = ( s_{ij}a_{ij}) $ is a matrix  in $\cam$ and we get a new version of Bennett's result
\begin{cor} \label{schurcor}
Let $S=(s_{ij}) $ and  $A= (a_{ij})$ be matrices in $\cas$ and $\cam$ such that $S$ is row bounded and $A$ is column bounded, then $S*A$ is bounded and $\|S*A\| \leq \|S\|_r\|A\|_c.$  
\end{cor}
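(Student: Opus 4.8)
The plan is to run, essentially word for word, the argument from the proof of Theorem~\ref{schur}, with the big Hilbert space $H\otimes\ell^2(J)$ replaced by the internal decomposition $H=\bigoplus_{j\in J}e_jH$. The combinatorial core of that proof — peel off the scalar factors by the Cauchy--Schwarz inequality, then interchange the order of a sum of non-negative reals — does not care whether the ``columns'' of the ambient space are copies of one fixed $H$ or the mutually orthogonal subspaces $e_jH$; the row-boundedness of $S$ (now meaning $\sum_{j\in J_0}|s_{ij}|^2\le\|S\|_r^2$ for every finite $J_0$, since $s_{ij}s_{ij}^{*}=|s_{ij}|^2$) plays the role that the row-boundedness of $K$ played there.

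First I would fix terminology: call $\xi\in H$ \emph{finitely supported} if $e_j\xi=0$ for all but finitely many $j$, and note such vectors are dense in $H$. For such a $\xi$ with components $\xi_j:=e_j\xi\in e_jH$, the candidate value of $(S*A)\xi$ is the formal vector whose $i$-th component is the finite sum $\sum_j s_{ij}a_{ij}\xi_j\in e_iH$. Fixing $i$, since the $s_{ij}$ are scalars and the $a_{ij}\xi_j$ all lie in the single Hilbert space $e_iH$, Cauchy--Schwarz gives
\[
\Big\|\sum_j s_{ij}a_{ij}\xi_j\Big\|^2 \;\le\; \Big(\sum_j|s_{ij}|^2\Big)\Big(\sum_j\|a_{ij}\xi_j\|^2\Big)\;\le\;\|S\|_r^2\sum_j\|a_{ij}\xi_j\|^2 .
\]
Summing over $i\in J$ and interchanging the two (non-negative) summations exactly as in Theorem~\ref{schur}, then for each fixed $j$ using $\sum_i\|a_{ij}\xi_j\|^2=\langle(\sum_i a_{ij}^{*}a_{ij})\xi_j,\xi_j\rangle\le\|A\|_c^2\|\xi_j\|^2$ by column-boundedness of $A$, one arrives at $\sum_i\|\sum_j s_{ij}a_{ij}\xi_j\|^2\le\|S\|_r^2\|A\|_c^2\|\xi\|^2$.

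In particular the outer sum over $i$ converges, so — the summands lying in the mutually orthogonal spaces $e_iH$ — the formal product vector is a genuine element of $H$ of norm at most $\|S\|_r\|A\|_c\|\xi\|$. This defines a linear map on the finitely supported vectors that extends by continuity to a bounded operator $z$ on $H$ with $\|z\|\le\|S\|_r\|A\|_c$. Finally I would verify that $z$ represents $S*A$: applying the construction to $\xi=e_j\eta$ (supported at the single index $j$) gives $z e_j\eta=\sum_i s_{ij}a_{ij}e_j\eta$, and since $a_{ij}=e_i a_{ij}e_j$ this says $e_i z e_j=s_{ij}a_{ij}\in e_iB(H)e_j$, so $S*A\in\cam$ is bounded with the claimed norm bound. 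I do not anticipate a real obstacle; the only points deserving a little care are the bookkeeping that turns the ``formal product vector'' into a bona fide element of $H$ (handled by finiteness of $\sum_i\|\cdot\|^2$ plus orthogonality of the summands), and the observation that this situation is \emph{not} literally a special case of Theorem~\ref{schur}, because the blocks $e_jH$ need not be pairwise isomorphic, so the argument must be reproduced rather than merely cited.
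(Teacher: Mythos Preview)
Your proof is correct and is exactly the approach the paper takes: the paper's proof simply says ``the proof is the same as that of Theorem~\ref{schur},'' and you have spelled out carefully what that means inside the decomposition $H=\bigoplus_j e_jH$. One small remark: your closing observation that the corollary is \emph{not} a special case of Theorem~\ref{schur} is slightly off --- the paper points out that one can also deduce it directly from the theorem by viewing $S$ (via $s_{ij}\mapsto s_{ij}I_H$) and $A$ (via $a_{ij}\in e_iB(H)e_j\subseteq B(H)$) as elements of $M_J(B(H))$, and then compressing the resulting bounded operator on $H\otimes\ell^2(J)$ to the isometric copy $\bigoplus_j e_jH\otimes\delta_j$ of $H$.
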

 \begin{proof}
 The proof is the same as that of Theorem \ref{schur}, even though the meaning of the product $s_{ij} a_{ij} $ is not identical to the meaning of the product $k_{ij}l_{ij} $ which appears in the previous proof. 
 
 It is also possible to apply the theorem directly and think of  both $S$  and $A$ as  elements in $M_J(B(H)).$   
 \end{proof}

\section{The basic example. } \label{basic}

The results we present in this article  are  based on some  well known aspects of elementary harmonic analysis. The unit circle $\bt$ is equipped with  the Hilbert space  $H: = L^2(\bt, (1/2\pi) d\theta) $  based on the Haar probability measure, and the self-adjoint operator $D := (1/i)\frac{d}{d\theta} $ is just differentiation with respect to arc length, multiplied by $-i.$ Basic Fourier analysis tells us that $H$ has an orthonormal basis $e_n := e^{in\theta}$ of eigenvectors for $D$ and with respect to this basis $D$ has a matrix representation as a diagonal matrix with all the integers as the  diagonal elements. Our basic observation is that if $y$ is a bounded operator on $H$ with matrix $(y_{ij})$ with respect to the basis $(e_n),$ then  the commutator $[D,y]$ makes sense as an infinite matrix even if there is no densely defined operator associated to this expression. Further the commutator may be described as a Schur product of the matrix $(y_{ij}) $ by the matrix $S = (S_{ij})$ given as $S_{ij}  = (i-j).$ If $g$ is a complex function on $\bz,$ then we see that if we define a matrix $$S:= (S_{ij}) \quad S_{ij} := \begin{cases} 0 \qquad \quad \text{ if } i =j \\
\frac{g(i) - g(j)}{i - j} \, \, \,  \text{ if } i \neq j \end{cases}, $$
then the commutator $[g(D), y] $ has a matrix which is given as the Schur product $(S_{ij})*([D,y]_{ij}).$
The next sections will show some examples on how this basic relation may be transformed, such that it works for a general self-adjoint operator $D,$ which may have a non-trivial continuous 
spectrum.

\section{The space $\cam_D$ of infinite matrices associated with $D$. } \label{cam}

Let $D$ be a self-adjoint operator on a Hilbert space $H$ and $y$ a bounded operator on $H,$ then we follow   \cite{AMG}, and say that $y$ is $n$ times weakly $D-$differentiable, if for any pair of vectors $\xi, \eta$ in $H$ the function $$\br \ni t \to \langle e^{itD}ye^{-itD}\xi, \eta \rangle $$ is $n$ times differentiable. 
In the articles \cite{EC1} and \cite{EC2} we presented several equivalent properties to being $n-$times weakly $D-$differentiable and pointed out that most of these may be found in the text book \cite{AMG}, however one point of view was original, and we will recover the set up for this property here in order to set the stage for the arguments to come. If $y$ is weakly $D-$differentiable, then we introduced the weak $D-$ derivative $\d(y)$ in the introduction, and we quote from \cite{EC2}, that if $y$ is  weakly $D-$differentiable, then  
 \begin{align} \forall \xi, \, \eta \, \in H: \quad \frac{d}{dt} \langle e^{itD}ye^{-itD}\xi, \eta \rangle &= \langle e^{itD}\d(y)e^{-itD}\xi, \eta \rangle \nonumber \\
\mathrm{dom}([D,y]) = \mathrm{dom}(D) \quad \text{and} \quad  \d(y) \big| \mathrm{dom}(D) &= i[D,y] .\nonumber
 \end{align}
 
 For an arbitrary bounded operator $y$ the expression $i[D,y]$ is an operator on $H,$ but it may have a small domain of definition, and it may not even be densely defined. In the basic example in Section \ref{basic} the commutator $i[D,y]$  always exists as an infinite matrix, and we will now present our way of transforming this {\em set up } to the case where the spectrum of $D$ has a continuous part.  
 
 Given the self-adjoint operator $D$  we define a sequence of spectral projections $e_n $ for $D,$ indexed  over the integers $\bz,$ by defining $e_n$ as the spectral projection for $D$ corresponding to the interval $[ n-(1/2), n+(1/2)[.$ Many of these projections may  vanish, but this will have no effect on the computations to come. Then we define $\cam_D$ as a space of matrices where the entries are bounded operators by 
 $$ \cam_D:= \{(x_{ij})\,: \, i, j \in \bz, \text{ and }  x_{ij} \in e_iB(H)e_j\, \}.$$ 
 
 Any bounded operator $y$ on $H$ has a natural representative $m(y)$ in $\cam_D$ which is given by $$m(y)_{ij} := e_iye_j.$$
For each integer $j$ the operator $d_j:= De_j$ is bounded, defined on all of $H$ and an element in $e_jB(H)e_j,$ so we can define an element $m(D) $ in $\cam_D$ by $$ m(D)_{ij} := \begin{cases} 0 \text{ if } i \neq j \\
 d_j \text{ if } i = j.\end{cases}$$
 
 For any element $x$ in $\cam_D$  the commutator $[m(D),x]$ is a well defined element in $\cam_D$ which we denote 
 $d(x) = \big( d(x)_{ij}\big)$ and it is given by
  $$ d(x)_{ij}  := [m(D),x]_{ij} := d_ix_{ij} - x_{ij}d_j.$$ 
 In this way, any iterated commutator such as  $$d^k(x) = [m(D), [ m(D), \dots [m(D), x]\dots ]]$$ is meaningful and we showed in \cite{EC2} that a bounded operator $y$ on $H$ is $n$ times weakly $D-$differentiable if and only if all the iterated commutators $d^k\big(m(y)\big)$ for $1 \leq k \leq n$ represent bounded operators on $H.$  

It is - of course - still just as difficult to prove boundedness for matrices  as to establish boundedness on a given domain of vectors, but the matrix language makes it possible to  assign a meaning to the expression $[D,y]$ in situations where the operator theoretical meaning is not existing. 
Further the mapping $d: \cam_D \to \cam_D$ is clearly linear even if there may exist bounded operators $y$ and $ z$ on $H$ such that   $[D, y+z] \neq [D,y] + [D, z].$   

There is still another advantage in working with $\cam_D,$ namely that $\cam_D$ is left invariant by Schur multiplication by scalar matrices $S= (S_{ij})_{(i,j \in \bz)}.$ We realized this phenomenon, when working with the unit circle $\bt$ as described in Section \ref{basic}. In that section we describe how the matrix of a commutator $[g(D),y]$ may be obtained as a Schur product of a scalar matrix $(S_{ij})$ and the matrix for the commutator $[D,y].$ 
It is not possible to lift this simple identity to the case where $D$ has continuous spectrum, since $$ [m(D), m(y)]_{ij} = d_iy_{ij} - y_{ij}d_j$$ and in general the expression $d_jy_{ij} $ is not defined, so we can not imitate the expression $(i-j)y_{ij}$ from the basic example. The general strategy in the sections to come is to replace $D$ by a perturbed operator $\overline{D}$ with pure point spectrum, such that $D-\overline{D}$ is bounded and satisfies $\|D-\overline{D}\| \leq 1/2.$  We define $\overline{D} $ as a Borel function of $D.$ 

\begin{equation}
\overline{D} := \mathrm{closure}\big(\sum_{n \in \bz}n e_n  \big), \label{Dbar}
\end{equation} 
and then  we may define a bounded operator $b$ as the infinite strong operator sum below, such that  
\begin{equation}
b:= \sum_{n \in \bz} (De_n - ne_n), \, \, \, \, \|b\| \leq \frac{1}{2}, \, \,  \, \, D= \overline{D} + b. \label{b} 
\end{equation}

It is now easy to check that for any bounded operator $y$ we have 
$$[m(\overline{D}),m(y)]_{ij} = (i-j)y_{ij},$$
so differentiation with respect to $\overline{D}$ may be expressed as a Schur product.  
\section{On $\|[g(D),y]\|$ for some  Borel  functions  $g(t).$} \label{Holder}

The usage of the Schur product, inside the space of matrices $\cam_D$ in the study of commutators of the type $[g(D), x],$ is easily demonstrated when $g(t) $ is H{\"o}lder continuous, i.e. there exist positive constants $\a$ and $C$ such that for any real numbers $s$ and $t$ we have $|g(s) - g(t)| \leq C|s-t|^\a.$ In this case we say that $g(t) $ is $(\a, C)$ H{\"o}lder continuous. However, it turns out that the proof of such a result does not need as strong a property as H\"older  continuity, instead we only need  a  property which we define below and denote H\"older boundedness. 

\begin{defn} \label{HoldBd}
Let $\a >0,\,  A \geq 0 , \, B \geq  0$ and $g(t) $ be a   complex Borel function on $\br,$ then $g(t)$ is said to be $(\a,A,B)$ H\"older bounded if $$ \forall \, s,t \in \br: \quad |g(s) - g(t) | \leq  A + B|s-t|^\a.$$
\end{defn}

We have searched the literature for places where this property has been used or studied, but so far without any success. 
 
It should be remarked, that the methods we propose also may fit to situations where $g(t) $ is only defined and H{\"o}lder bounded  on a subset of $\br$ containing the spectrum of $D$, but the details needed for a general theorem seems to be somewhat  complex, so we suggest that possible extensions in this directions are made on an {\em ad hoc } basis, if needed. 

\begin{thm}  \label{HoldThm}
Let $D$ be a self-adjoint operator on a
 Hilbert space and let $g(t) $ be an $(\a, A, B)$  H{\"o}lder bounded Borel  function on $\br.$ Let $n$ be the smallest integer such that $n > \a + 1/2$ and $y$ be a bounded operator which is $n-$times weakly $D-$ differentiable then $y$ is weakly  $g(D)-$differentiable and 
 
$$ \|[g(D), y]\| \leq 2(A+B) \bigg(\|y\| +  \sqrt{\frac{n -\a}{2n - 2\a-1}}\sum_{k=0}^n \binom{n}{k} \|\d^k(y)\| \bigg).$$
\end{thm}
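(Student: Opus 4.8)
The plan is to reduce the general self-adjoint $D$ to the perturbed operator $\ovl D$ with pure point spectrum, where differentiation becomes a genuine Schur product, and then to control the passage from $g(D)$ to $g(\ovl D)$ using H\"older boundedness. First I would work inside $\cam_D$ and express the matrix of $[g(\ovl D),y]$ as the Schur product $S * m\big(\d(y)\big)$, where $S = (S_{ij})$ is the scalar matrix with $S_{ij} = 0$ on the diagonal and $S_{ij} = (g(i)-g(j))/(i-j)$ off the diagonal, exactly as in the basic example of Section \ref{basic}. More precisely, since $\d(y)$ restricted to $\mathrm{dom}(\ovl D)$ is $i[\ovl D,y]$ and $[m(\ovl D),m(y)]_{ij} = (i-j)y_{ij}$, the matrix for $[g(\ovl D),y]$ is the elementwise product of $(S_{ij})$ with the matrix $\big((i-j)y_{ij}\big)$; but we want the factor that is a bona fide bounded operator, so the real decomposition is $m\big([g(\ovl D),y]\big) = S * m\big(-i\d(y)\big)$ when $y$ is once weakly $D$-differentiable. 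To apply Corollary \ref{schurcor} we need $S$ to be row bounded (equivalently column bounded, since $S$ is essentially antisymmetric in modulus) and $m(-i\d(y))$ to be column bounded, which holds with $\|m(\d(y))\|_c \le \|\d(y)\|$ since $m(\d(y))$ is the matrix of an honest bounded operator.

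The crux is therefore the estimate $\|S\|_r \le C_{n,\a}$ for an explicit constant. Here I would split $S = S_0 + S_1$ where $S_0$ handles the ``bounded'' part coming from $A$ and $S_1$ handles the $B|s-t|^\a$ part. For the row norm of the relevant pieces I expect to interpolate: the H\"older bound gives $|S_{ij}| \le A/|i-j| + B|i-j|^{\a-1}$ for $i\ne j$, and then $\sum_{j\ne i}|S_{ij}|^2 \le \sum_{m\ge 1} 2\big(A/m + Bm^{\a-1}\big)^2$, which diverges — so a naive bound fails and this is the main obstacle. The fix, and the reason the hypothesis $n > \a + 1/2$ appears, is to \emph{not} put all of $S$ against $\d(y)$ but to distribute it against the higher commutators: write $g(i)-g(j)$ against the full telescoping identity relating $[g(\ovl D),y]$ to $\d^k(y)$ for $0 \le k \le n$. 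Concretely, one uses that for the model operator $\ovl D$ one has a matrix identity expressing $m\big([g(\ovl D),y]\big)$ as $S' * \big(\sum_{k=0}^n \binom{n}{k} m(\d^k(y))\big)$ up to controlling a kernel $S'_{ij}$ built from $(g(i)-g(j))/(i-j)^n$ or similar, whose square-summed rows now converge precisely because $2(n-\a) - 1 > 0$; the geometric-type factor $\sqrt{(n-\a)/(2n-2\a-1)}$ is the value of $\sum_{m\ge1} m^{2(\a-n)}$-type sum (or an integral bound $\int_1^\infty x^{2\a-2n}\,dx = 1/(2n-2\a-1)$ together with a leading term), and the $\binom{n}{k}$ weights come from rewriting $(i-j)^n$ via the binomial expansion of $d(x)$ iterated $n$ times. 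The factor $2(A+B)$ then collects the two pieces $S_0$ (giving the ``$\|y\|$'' and $A$ contribution) and $S_1$ (giving the $B$ contribution and the square-root factor), with the $2$ absorbing the two off-diagonal directions $i-j = \pm m$.

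Having bounded $\|m([g(\ovl D),y])\|$ by the stated right-hand side, two tasks remain. First, transfer from $\ovl D$ back to $D$: since $D = \ovl D + b$ with $\|b\| \le 1/2$ and $b$ bounded, and since being $n$-times weakly $D$-differentiable is equivalent (by \cite{EC2}) to all $d^k(m(y))$, $1\le k\le n$, being bounded — a condition stable under bounded perturbations of $D$ because $d_{\ovl D} = d_D - \ad_b$ on $\cam_D$ with $\ad_b$ bounded — one checks that the $\d^k$ for $\ovl D$ are bounded with norms controlled by those for $D$ via binomial expansion in $d_D$ and $\ad_b$, $\|b\|\le 1/2$; this is where I would be slightly careful that the constants still collapse into $\binom{n}{k}$ and $2(A+B)$ as claimed (possibly the statement already tacitly normalizes so that $\ovl D$-derivatives are bounded by $D$-derivatives). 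Second, upgrade ``the matrix $m([g(D),y])$ is bounded'' to ``$y$ is weakly $g(D)$-differentiable'': here I would invoke the equivalence from \cite{EC1,EC2} that $y \in C^1(g(D))$ iff $m\big([g(D),y]\big)$ represents a bounded operator, which is exactly what we have just proved, and then the norm inequality for $\|[g(D),y]\|$ is read off from the matrix norm bound since $\|[g(D),y]\| = \|d_{g(D)}(m(y))\|$ in the sense of $\cam_{g(D)}$. The main obstacle, to reiterate, is organizing the binomial/telescoping identity for $[g(\ovl D),y]$ so that the divergent $\sum (A/m + Bm^{\a-1})^2$ is replaced by the convergent $\sum m^{2(\a-n)}$, and extracting from it the precise constant $\sqrt{(n-\a)/(2n-2\a-1)}$.
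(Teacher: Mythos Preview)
Your approach is essentially the paper's, and you correctly locate the two main ingredients: the Schur kernel $S'_{ij}=(g(i)-g(j))/(i-j)^n$ against the $n$-fold $\ovl D$-commutator, and the binomial expansion $\ovl d^n=(d-f)^n$ with $f=\ad_{m(b)}$, $\|f\|\le 1$, which converts $\ovl D$-derivatives into $D$-derivatives and produces the $\binom{n}{k}$. The row-norm estimate is also right: for $k\ge 1$ one simply bounds $A+Bk^\a\le (A+B)k^\a$, so $\sum_{k\ge 1}2\big((A+Bk^\a)/k^n\big)^2\le 2(A+B)^2\sum_{k\ge1}k^{2\a-2n}\le 2(A+B)^2\big(1+\tfrac{1}{2n-2\a-1}\big)$, which is exactly $4(A+B)^2(n-\a)/(2n-2\a-1)$. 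Your splitting $S=S_0+S_1$ is therefore unnecessary and is what leads you astray on the constants.

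There is one genuine gap. You bound $\|[m(g(\ovl D)),m(y)]\|$ and then slide without comment to $\|[m(g(D)),m(y)]\|$; your ``transfer from $\ovl D$ back to $D$'' paragraph only handles the derivatives $\d^k$, not the function $g$. The lone $2(A+B)\|y\|$ term in the statement does \emph{not} come from an $S_0$ piece of the Schur multiplier; it comes from the separate estimate
\[
\big\|[\,m(g(D))-m(g(\ovl D)),\,m(y)\,]\big\|\le 2\|g(D)-g(\ovl D)\|\,\|y\|\le 2(A+B)\|y\|,
\]
which holds because $\ovl D$ is a Borel function of $D$ (the rounding function $\rho$), so $g(D)-g(\ovl D)=(g-g\circ\rho)(D)$ and $|g(t)-g(\rho(t))|\le A+B(1/2)^\a\le A+B$ pointwise. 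Once you add this step, the clean organization is: $[m(g(\ovl D)),m(y)]=S'*\ovl d^{\,n}(m(y))$ as an identity in $\cam_D$, then $\|S'*\ovl d^{\,n}(m(y))\|\le\|S'\|_r\,\|\ovl d^{\,n}(m(y))\|$ by Corollary~\ref{schurcor}, and finally $\|\ovl d^{\,n}(m(y))\|\le\sum_{k=0}^n\binom{n}{k}\|\d^k(y)\|$ from the binomial expansion. The Schur product is with the single bounded matrix $\ovl d^{\,n}(m(y))$, not with the sum $\sum\binom{n}{k}m(\d^k(y))$ as you wrote (those differ by the $(-f)^{n-k}$ factors).
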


\begin{proof}
We will use the {\em discrete } approximant $\overline{D}$ as introduced in (\ref{Dbar}), and we note from (\ref{b}) that the difference $\overline{D} -D $ is bounded, with closure $b$  and of norm at most $1/2.$ Then by the functional calculus for $D$  we find that the difference $ g(\overline{D}) - g(D)$ is densely defined and bounded of norm at most $A + B(1/2)^{\a} \leq A+B.$ This implies, that for any bounded operator $y$ we have \begin{equation}
\|[\big(m(g(D)- m(g(\overline{D}))\big), m(y)]\| \leq 2(A+B)\|y\|. \label{gD-gD)} \end{equation}
 
The Schur multiplier we need will be given as the matrix 
\begin{equation}
 S= (S_{ij})_{(i,j \in \bz)}: \quad S_{ij} = \begin{cases} 0 \qquad  \quad  \text{ if } i=j \\
\frac{g(i)  - g(j)}{(i -j)^n } \, \,  \text{ if }   i \neq j\end{cases}, \label{Sij} \end{equation} 
and then for any $(x_{ij}) $ in $\cam_D$ we can express the matrix $[m(g(\overline{D})), x]$ as the Schur product $S*[m(\overline{D}), [m(\overline{D}, \dots,[m(\overline{D}, x]\dots ]] ),$ with $n$ commutators. In order to be able to use Corollary  \ref{schurcor} we need to compute an estimate for the row norm of $ S$ and we get for any $i$ in $\bz$ that \begin{align} \label{gsum}
\sum_{j \in \bz}|S_{ij}|^2 &\leq 2 \sum_{k=1}^\infty \bigg(\frac{A + Bk^ {\a}}{k^n}\bigg)^2 \nonumber \\ & \leq 2(A+B)^2\sum_{k=1}^\infty k^{2\a- 2n} \nonumber \\ & \leq  2(A+B)^2(1 + \frac{1}{2n - 2\a-1}) \nonumber \\ &= 4(A+B)^2\frac{n -\a }{2n - 2\a-1}, 
\end{align}
so the row-norm is dominated as \begin{equation}\|S\|_r \leq 2(A+B)\sqrt{(n-\a)/(2n-2\a-1)}.\end{equation} 
 
 This inequality may be used to compare   expressions like $\|[g(\overline{D}), y]\|$ and  $\|[\overline{D}, [\overline{D}, \dots ,[\overline{D},y] \dots ]]\|,$   but we are interested in the terms based on $D$ instead of $\overline{D},$ so we will introduce the operators $\overline{d}  $ and $f$ on $\cam_D$ which are defined by 
 \begin{align}  \label{f} 
 \forall x \in \cam_D :&\quad \overline{d}(x) := m(\overline{D})x - xm(\overline{D} ) = [m(\overline{D}), x] \nonumber \\
 \forall x \in \cam_D :&\quad f(x) := m(b)x - xm(b) = [m(b), x], \text{ and} \nonumber \\
 \forall x \in \cam_D :&\quad x \text{ bounded } \implies f(x) \text{ bounded and } \|f(x) \| \leq \|x\| \nonumber \\
 \forall x \in \cam_D :&\quad d(x) \, = \overline{d}(x) + f(x). \nonumber
 \end{align}
The operators $d, f, \overline{d}$ on the linear space $\cam_D$ all commute, since both $\overline{D}$ and $b$ are functions of $D,$  and any left multiplication operator commutes with any right multiplication operator. The norm estimate above  follows since $\|b\| \leq 1/2.$ By elementary algebra we get that $$\overline{d}^n = \sum_{k=0}^n\binom{n}{k} (-f)^{n-k}d^k$$   
and if $y$ is $n-$times weakly  $D-$ differentiable then it is also $n-$times weakly $\overline{D}-$ differentiable and satisfies 
\begin{equation}
\|\overline{d}^n(m(y))\| \leq \sum_{k=0}^n \binom{n}{k} \|\d^k(y)\| \label{destimate} 
\end{equation}
 Now assume that $ n $ is the least integer such that $n > \a + 1/2, $ and $y$ is bounded and $n-$times weakly $D-$differentiable then, even if some terms are infinite, we may  by (\ref{gD-gD)}) and (\ref{destimate}) estimate as follows  
\begin{align} \label{finholder}
&\|[m(g(D)), m(y)]\|  \\&\leq  \|[\big(m(g(D)) -m(g(\overline{D}))\big), m(y) ]\|  + \|[m(g(\overline{D})), m(y) ]\|  \nonumber \\  
& \leq \|[m(g(\overline{D})), m(y) ]\|+ 2(A+B)\|y\| \nonumber \\
&=  \|(S_{ij}) * \overline{d}^n(m(y)) ]\| + 2(A+B)\|y\| \nonumber \\ 
& \leq 2(A+B)\sqrt{(n-\a)/(2n - 2\a -1)}  \|\overline{d}^n(m(y)) \| + 2(A+B)\|y\| \nonumber \\
& \leq 2(A+B)\bigg(\|y\| +   \sqrt{(n-\a)/(2n - 2\a -1)}\sum_{k=0}^n \binom{n}{k}   \|\d^k(y)\| \bigg). \nonumber   
\end{align}

Hence we see that $[m(g(D)), m(y)] $ is bounded in $\cam_D,$   so $y $ is weakly $g(D)-$differentiable and the theorem follows.
\end{proof}

\subsection{On $\|[g(D),y]\|$ for some absolutely continuous functions.} \label{g}

In this subsection  $g(t)$ is assumed to be a complex valued absolutely   continuous function on $\br$  which has  a certain type of bound on its growth.  To be more precise we assume that $g(t)$ has a locally integrable derivative $g^\prime(t)$ which may be written as a sum of an essentially bounded function $u(t)$ and an integrable function $\ell(t).$
There exists a theory of interpolation in Banach spaces, \cite{LT}, and it is well known that it is possible to define a Banach space $L^1(\br) + L^\infty(\br)$ consisting of equivalence classes of sums $\ell(t) +u(t)$ with $\ell(t)$ integrable and $u(t)$ essentially bounded. The norm of an element $h(t)$ in the Banach space $L^1(\br) + L^\infty(\br)$ is given by 
\begin{align*}&\|h\|_{(L^1 + L^\infty)} := \nonumber \\ & \inf \{\|\ell\|_1 + \|u\|_\infty\, : \ell \in \cl^1(\br), u \in \cl^\infty(\br) \text{ and } h = \ell + u,  \text{ a.e.} \}.\end{align*} 

Before we formulate our theorem we will like to remind you that for any $ p \geq 1$ we have $L^p \subseteq L^1 + L^\infty$ such that for $f$ in $\cl^p$ we have $\|f\|_{(L^1+L^\infty)} \leq \|f\|_p^p + 1. $ This is an easy consequence  of the fact that if the set $U$ is defined as $U := \{ t \in \br : |f(t)| \leq 1\}$ then $ f$ may be written as $f*(1 - 1_U) + f*1_U $ which is a decomposition with the desired properties.    
\begin{thm} \label{g(D)}
Let $D$ be a self-adjoint operator on a Hilbert space $H$ and $y$ a bounded operator on $H.$ Let $g(t)$ be a complex absolutely  continuous function on $\br.$ If $g(t)$  has a derivative $g^\prime(t) $ in $\cl^1(\br) + \cl^\infty(\br)$  and  $y$ is in $C^2(D)$  then $y$  is weakly $g(D)-$differentiable and 
$$ \|[g(D),y]\| \leq  \|g^\prime\|_{(L^1+L^\infty)}\big(4\|y\| + 4\|\d(y)\| +  2\|\d^2(y)\| \big)  
$$
\end{thm}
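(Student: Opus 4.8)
The plan is to obtain this statement as a special case of Theorem \ref{HoldThm}. The key observation is that an absolutely continuous function $g$ whose derivative lies in $\cl^1(\br) + \cl^\infty(\br)$ is automatically H\"older bounded with exponent $\a = 1$. Indeed, fix any decomposition $g^\prime = \ell + u$ holding almost everywhere, with $\ell \in \cl^1(\br)$ and $u \in \cl^\infty(\br)$. Since $g$ is absolutely continuous, the fundamental theorem of calculus gives, for all $s, t \in \br$,
\begin{equation*}
|g(s) - g(t)| = \Big| \intl_t^s g^\prime(\tau)\, d\tau \Big| \leq \Big| \intl_t^s \ell(\tau)\, d\tau \Big| + \Big| \intl_t^s u(\tau)\, d\tau \Big| \leq \|\ell\|_1 + \|u\|_\infty\,|s - t|,
\end{equation*}
so $g$ is $(1, \|\ell\|_1, \|u\|_\infty)$ H\"older bounded in the sense of Definition \ref{HoldBd}.

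Next I would apply Theorem \ref{HoldThm} with $\a = 1$, $A = \|\ell\|_1$ and $B = \|u\|_\infty$. The smallest integer $n$ with $n > \a + 1/2 = 3/2$ is $n = 2$, and the hypothesis $y \in C^2(D)$ is precisely that $y$ is $2$-times weakly $D$-differentiable, so the theorem applies: $y$ is weakly $g(D)$-differentiable and
\begin{equation*}
\|[g(D), y]\| \leq 2(\|\ell\|_1 + \|u\|_\infty)\Big( \|y\| + \sqrt{(n-\a)/(2n-2\a-1)} \sum_{k=0}^{n} \binom{n}{k} \|\d^k(y)\| \Big).
\end{equation*}
For $n = 2$ and $\a = 1$ the radical equals $1$ and $\binom{2}{0} = \binom{2}{2} = 1$, $\binom{2}{1} = 2$, so the right-hand side collapses to $(\|\ell\|_1 + \|u\|_\infty)\big(4\|y\| + 4\|\d(y)\| + 2\|\d^2(y)\|\big)$.

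Finally, the left-hand side $\|[g(D), y]\|$ is independent of the chosen decomposition of $g^\prime$, so I would take the infimum over all admissible pairs $(\ell, u)$; by the definition of the norm on $\cl^1(\br) + \cl^\infty(\br)$ this replaces $\|\ell\|_1 + \|u\|_\infty$ by $\|g^\prime\|_{(L^1+L^\infty)}$, which is exactly the asserted inequality. I do not expect a genuine obstacle here: the only substantive point is the elementary identity that absolute continuity together with $g^\prime \in L^1 + L^\infty$ forces H\"older boundedness with exponent $1$, and the only thing to verify is that the passage to the infimum is legitimate — which it is, since the operator norm on the left is fixed while the constant on the right ranges over $\{\|\ell\|_1 + \|u\|_\infty : g^\prime = \ell + u \text{ a.e.}\}$. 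If a self-contained argument were preferred, one could instead repeat the proof of Theorem \ref{HoldThm} verbatim, using the Schur multiplier (\ref{Sij}) with $n = 2$ and the bound $|g(i) - g(j)| \leq A + B|i-j|$ in place of the H\"older estimate.
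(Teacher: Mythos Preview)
Your proposal is correct and follows essentially the same route as the paper: show that $g$ is $(1,\|\ell\|_1,\|u\|_\infty)$ H\"older bounded via the fundamental theorem of calculus, apply Theorem~\ref{HoldThm} with $\a=1$, $n=2$, and then optimize over decompositions of $g^\prime$. The only cosmetic difference is that the paper fixes $\eps>0$ and chooses a near-optimal decomposition before letting $\eps\to 0$, whereas you take the infimum directly at the end; the two are equivalent.
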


\begin{proof}
We start by showing that $g(t)$ is H\"older bounded, so let $\eps >0$  and let  $\ell(t)$ in $\cl^1(\br)$ and $u(t)$ in $\cl^\infty(\br)$  be such  that $g^\prime(t) = \ell(t) + u(t), \text{ a.e.}$ and $ \|\ell\|_1 + \|u\|_\infty \leq  \|g^\prime\|_{(L^1+L^\infty)}+ \eps.$ Then for any  $s, t$ in $\br$ we have 

\begin{align}
|g(s) - g(t)|  &= |\int_t^s g^\prime(v) dv |  \nonumber \\ &\leq |\int_t^s \ell(v) dv|  + |\int_t^s u(v)dv| \nonumber \\
& \leq \|\ell\|_1 + (\|u\|_\infty) |s-t|,  \label{gineq}
\end{align}

so $g(t)$ is H\"older bounded with the constants $(1, \|\ell\|_1 , \|u\|_\infty)$ We can then apply Theorem \ref{HoldThm} to obtain
\begin{align}
\|[g(D),y]\| &\leq  2(\|\ell\|_1 + \|u\|_\infty) \big(2\|y\| + 2\|\d(y)\| +  \|\d^2(y)\| \big) \nonumber \\  
 &\leq (\|g^\prime\|_{(L^1+L^\infty)} + \eps )\big(4\|y\| + 4\|\d(y)\| +  2\|\d^2(y)\| \big),  
\end{align}
and the theorem follows. 
\end{proof}

We may obtain a result which only depends on $\d(y)$ if we assume that  $g^\prime(t)$ is in $\cl^p$ for a   $p$ in the interval $[1, 2[.$ 

\begin{pro} \label{Lp}
Let $D$ be a self-adjoint operator on a Hilbert space H, $y$ a bounded operator on $H,$  $1\leq p<2$ and   $g(t)$ an absolutely continuous function on $\br$ such that $g^\prime(t)$  is in $L^p(\br).$ 
If $y$ is weakly $D-$differentiable, then $y$ is weakly $g(D)$ differentiable and 
$\|[g(D),y]\|  \leq  2\|g^\prime\|_p\bigg( \big(1+1/\sqrt{2-p}\big) \|y\|+ \big(1/\sqrt{2-p}\big)\|\d(y\|\bigg).  $
\end{pro}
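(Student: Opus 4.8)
The plan is to mimic the proof of Theorem \ref{g(D)} but replace the crude splitting of $g^\prime$ into a fixed $L^1$ part and a fixed $L^\infty$ part by a splitting that is \emph{scale-adapted} to the operator $\overline{D}$ with pure point spectrum on $\bz$. The point is that in the estimate of the row norm of the Schur multiplier we only ever integrate $g^\prime$ over intervals $[t,s]$ whose length is an integer $k \geq 1$; for such intervals H\"older's inequality gives $|g(s)-g(t)| = |\int_t^s g^\prime| \leq \|g^\prime\|_p\, k^{1-1/p}$, i.e. $g(t)$ is "$(\a,0,\|g^\prime\|_p)$ H\"older bounded along integer steps" with $\a = 1 - 1/p \in [0,1/2)$. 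Since $\a < 1/2$, the smallest integer $n$ with $n > \a + 1/2$ is $n = 1$, so only the first derivative $\d(y)$ enters.

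First I would make this precise by re-running the argument of Theorem \ref{HoldThm} with $n=1$ but keeping the $A$-term (the bounded perturbation term) and the $B$-term separate, because here the natural constants are $A$-like $= \|g^\prime\|_p \cdot 1^{1-1/p} = \|g^\prime\|_p$ (the $k=1$ contribution, which one may also think of as coming from the bound on $g(\overline D) - g(D)$, since $\|\overline D - D\| \le 1/2 < 1$) and $B$-like $=\|g^\prime\|_p$. Concretely: the difference $g(\overline D) - g(D)$ is bounded with norm at most $\|g^\prime\|_p (1/2)^{1-1/p} \leq \|g^\prime\|_p$, giving $\|[m(g(D))-m(g(\overline D)),m(y)]\| \leq 2\|g^\prime\|_p\|y\|$. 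For the Schur multiplier $S_{ij} = (g(i)-g(j))/(i-j)$ for $i\neq j$, $0$ on the diagonal, one has $|S_{ij}| \leq \|g^\prime\|_p |i-j|^{-1/p}$, so for each fixed $i$,
\begin{equation*}
\sum_{j \in \bz} |S_{ij}|^2 \leq 2\|g^\prime\|_p^2 \sum_{k=1}^\infty k^{-2/p} \leq 2\|g^\prime\|_p^2\Big(1 + \frac{1}{2/p - 1}\Big) = 2\|g^\prime\|_p^2 \cdot \frac{2}{2-p},
\end{equation*}
using $\int_1^\infty k^{-2/p}\,dk = 1/(2/p-1) = p/(2-p) \le 2/(2-p)$ since $p<2$; hence $\|S\|_r \leq 2\|g^\prime\|_p/\sqrt{2-p}$. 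Then $[m(g(\overline D)),m(y)] = S * \overline d(m(y))$ and Corollary \ref{schurcor} gives $\|[m(g(\overline D)),m(y)]\| \leq (2\|g^\prime\|_p/\sqrt{2-p})\|\overline d(m(y))\|$, while $\overline d = d - f$ with $\|f(m(y))\| \le \|y\|$ gives $\|\overline d(m(y))\| \leq \|\d(y)\| + \|y\|$. Adding the two contributions yields
\begin{equation*}
\|[g(D),y]\| \leq 2\|g^\prime\|_p\|y\| + \frac{2\|g^\prime\|_p}{\sqrt{2-p}}\big(\|y\| + \|\d(y)\|\big) = 2\|g^\prime\|_p\Big(\big(1 + \tfrac{1}{\sqrt{2-p}}\big)\|y\| + \tfrac{1}{\sqrt{2-p}}\|\d(y)\|\Big),
\end{equation*}
which is exactly the asserted bound, and boundedness of this matrix gives weak $g(D)$-differentiability of $y$.

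The main obstacle is the justification that $|g(i) - g(j)| \le \|g^\prime\|_p\,|i-j|^{1-1/p}$ actually holds: $g$ is only assumed absolutely continuous with $g^\prime \in L^p$, so one must be a little careful that the fundamental theorem of calculus applies (it does, for absolutely continuous $g$) and that H\"older's inequality is being applied on a finite-measure interval, which forces $p \geq 1$ — fine here. A secondary point of care is that when $p = 1$ the exponent $\a = 0$, so "$|g(s)-g(t)| \le \|g^\prime\|_1$" is just a uniform bound and $n=1$ still works; the series $\sum k^{-2}$ converges and $2/(2-p) = 2$, so the formula is consistent with the endpoint. One should also double-check, as in Theorem \ref{HoldThm}, that all the intermediate matrices live in $\cam_D$ and that the telescoping $\overline d = d - f$ is valid as operators on $\cam_D$ even before boundedness is known — but these are exactly the bookkeeping facts established in Section \ref{cam} and reused verbatim.
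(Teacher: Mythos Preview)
Your proof is correct and follows essentially the same route as the paper: establish the H\"older-type bound $|g(s)-g(t)| \le \|g'\|_p\,|s-t|^{(p-1)/p}$ via H\"older's inequality and then feed this into the Schur-multiplier argument of Theorem~\ref{HoldThm} with $n=1$. The only difference is presentational: the paper invokes Theorem~\ref{HoldThm} as a black box and splits into the cases $p=1$ (handled by a limit $\alpha\to 0^+$, since that theorem assumes $\alpha>0$) and $1<p<2$, whereas you unroll the row-norm computation directly and thereby treat all $p\in[1,2)$ at once.
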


\begin{proof}
The cases $p=1$ and $1< p < 2$ have different proofs, and we will first assume that $p=1.$ Then for any pair $s,t$ of real numbers $$|g(s) - g(t)| = |\int_t^s g^\prime(v) dv| \leq \|g^\prime\|_1$$ so $g(t)$ is H\"older bounded for any $\a > 0$ with constants $(\a, \|g^\prime\|_1, 0).$ Then By Theorem \ref{HoldThm} we have for $0 < \a < 1/2$ that 
$$ \|[g(D), y]\| \leq 2 \|g^\prime\|_1\big(\|y\|+ \sqrt{\frac{1 - \a}{1- 2\a}}(\|y\| + \|\d(y)\|)\big),$$ and if we let $\a$ decrease to $0,$ the result follows in the case $p=1.$

 When $1 < p < 2$  H\"older's inequality gives 
$$|g(t)-g(s)| = |\int_s^tg^\prime(v)dv| \leq \|g^\prime\|_p |t-s|^{(p-1)/p},$$  so $g(t)$ is H\"older continuous with constants $((p-1)/p, \|g^\prime\|_p).$
Hence Theorem \ref{HoldThm} shows that 
we may use $n =1$ to obtain that if $y$ is weakly $D-$differentiable then it is weakly $g(D)$ differentiable with  
\begin{align} \|[g(D),y]\| &\leq 2\|g^\prime\|_p\big(\|y\| + \frac{1}{\sqrt{2-p}}(\|y\| + \|\d(y)\| )\big) \nonumber \\
& = 2\|g^\prime\|_p\bigg(\big(1+ \frac{1}{\sqrt{2-p}}\big)\|y\| + \frac{1}{\sqrt{2-p}} \|\d(y)\| \bigg)   \end{align}
and the proposition follows.    \end{proof}

\subsection{Relations between  commutators $[D,y]$ and commutators   $[\log(D),y].$ } \label{sec6}
 In this subsection we will show how Proposition \ref{Lp} may be applied to the situation where we study $\log(D)$ for   a positive, possibly unbounded  operator $D$ on a Hilbert space $H$ such that $D$ is either invertible or the point $\{0\}$ is an isolated point in the spectrum $\s(D).$ In either case there is a real number $\b$ which is the smallest positive value in $\s(D),$ and this value will play an important role in the estimates we are giving below.

For any $\b > 0$ we  then define an absolutely  continuous real function $g_\b(t)$ on $\br$ which will make the use of Proposition  \ref{Lp} possible. 

We define 
$$ \forall \b >0: \qquad g_\b(t): = \begin{cases} \log(t) \, \, \text{ if } t \geq \b \\
 \log(\b) \, \text{ if } -\infty <t <  \b \end{cases}, $$  
and then we see that the conditions for $g_\b(t)$ as given in Proposition  \ref{Lp} are satisfied for $p=3/2$ and that 
\begin{equation}
 \|g^\prime_\b\|_{(3/2)}  = 2^{(2/3)} \bigg(\frac{1}{\b}\bigg)^{(1/3)}. \label{G}
\end{equation}

 As an immediate consequence of Proposition \ref{Lp} we  get the following proposition 
 \begin{pro} \label{gbD}
Let $D$ be a positive self-adjoint operator on a Hilbert space $H,$ such that there exists a smallest positive value $\b$ in the spectrum, and let $y$ be  a bounded operator on $H.$ 
If $y$ is weakly $D-$differenti-able then it is weakly $g_\b(D)$ differentiable  
and 
$$ \|[g_\b(D),y] \| \leq \bigg(\frac{1}{\b}\bigg)^{(1/3)}\bigg(8\|y\| + 5\|\d(y)\|\bigg).$$
 \end{pro}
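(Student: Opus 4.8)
The plan is to apply Proposition \ref{Lp} directly with the function $g_\b(t)$ and the exponent $p=3/2$. First I would verify that $g_\b(t)$ meets the hypotheses of Proposition \ref{Lp}: it is absolutely continuous on $\br$ (it is constant on $(-\infty,\b)$, equals $\log t$ on $[\b,\infty)$, and the two pieces match continuously at $t=\b$), and its derivative is $g_\b^\prime(t) = 0$ for $t<\b$ and $g_\b^\prime(t) = 1/t$ for $t>\b$. The key computation is that $g_\b^\prime \in L^{3/2}(\br)$ with
\begin{equation}
\|g_\b^\prime\|_{(3/2)}^{3/2} = \intl_\b^\infty t^{-3/2}\, dt = 2\b^{-1/2}, \nonumber
\end{equation}
so that $\|g_\b^\prime\|_{(3/2)} = \big(2\b^{-1/2}\big)^{2/3} = 2^{2/3}\b^{-1/3}$, which is exactly (\ref{G}).

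Next I would insert $p=3/2$ into the conclusion of Proposition \ref{Lp}. With $p=3/2$ we have $2-p = 1/2$, hence $1/\sqrt{2-p} = \sqrt{2}$, and the bound reads
\begin{equation}
\|[g_\b(D),y]\| \leq 2\|g_\b^\prime\|_{(3/2)}\Big( (1+\sqrt 2)\|y\| + \sqrt 2\,\|\d(y)\| \Big). \nonumber
\end{equation}
Substituting $\|g_\b^\prime\|_{(3/2)} = 2^{2/3}\b^{-1/3}$ gives a leading constant $2\cdot 2^{2/3} = 2^{5/3}$, so the $\|y\|$ coefficient is $2^{5/3}(1+\sqrt 2) \approx 3.175\cdot 2.414 \approx 7.67$ and the $\|\d(y)\|$ coefficient is $2^{5/3}\sqrt 2 = 2^{13/6} \approx 4.49$. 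Rounding these up to $8$ and $5$ yields the stated inequality
\begin{equation}
\|[g_\b(D),y]\| \leq \Big(\tfrac{1}{\b}\Big)^{1/3}\big(8\|y\| + 5\|\d(y)\|\big), \nonumber
\end{equation}
and the weak $g_\b(D)$-differentiability of $y$ is inherited verbatim from Proposition \ref{Lp}. One subtlety worth checking is that Proposition \ref{Lp} is stated for a general self-adjoint $D$, whereas here $D$ is positive with smallest positive spectral value $\b$; since the hypotheses of Proposition \ref{Lp} only concern $g_\b$ being absolutely continuous with $L^p$ derivative on all of $\br$ — which we have arranged — positivity of $D$ plays no role in the application itself and only motivates the particular choice of $g_\b$.

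There is no serious obstacle here; this is a corollary obtained by specialization. The only place requiring a small amount of care is the arithmetic of rounding the constants: one must confirm that $2^{5/3}(1+\sqrt 2) < 8$ and $2^{13/6} < 5$, i.e. that replacing the sharp constants by $8$ and $5$ genuinely weakens (rather than strengthens) the inequality, so that the stated bound is a valid consequence. Both numerical inequalities hold comfortably, so the proof is complete once the $L^{3/2}$-norm computation (\ref{G}) and these two estimates are recorded.
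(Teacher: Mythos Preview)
Your proposal is correct and matches the paper's approach exactly: the paper states the proposition as ``an immediate consequence of Proposition \ref{Lp}'' after recording the computation $\|g_\b'\|_{3/2}=2^{2/3}\b^{-1/3}$ in (\ref{G}), and you have simply filled in the arithmetic of specializing Proposition \ref{Lp} to $p=3/2$ and rounding the resulting constants $2^{5/3}(1+\sqrt2)<8$ and $2^{13/6}<5$.
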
 

We will also introduce the function $\widetilde{\log}(t)$ which is defined by 
$$ \widetilde{\log}(t) := \begin{cases} \log(t) \text{ if } t > 0 \\ 0 \text{ if } t \leq 0, \end{cases}$$
 and we find that if $D$ is positive and invertible  then $g_\b(D) = \widetilde{\log}(D).$ In order to investigate commutators with  $\widetilde{\log}(D)$ when $D$ is positive and not invertible, we let $E_0$ denote the spectral projection onto the kernel of $D,$  and then we find that $D + \b E_0$ is invertible plus
 
\begin{equation} \widetilde{\log}(D) = g_\b(D) - \log(\b) E_0, \label{E0} \end{equation} 
 so based on this we see that a bounded operator $ y $ is weakly  $\widetilde{\log}(D)-$dif-ferentiable   if and only if it is  weakly $g_\b(D)-$differentiable, and then 
   we can estimate the norm $\|[\widetilde{\log}(D), y]\|,$ so we obtain

 \begin{thm} \label{logNinv}
 Let $D$ be a positive  operator  on a Hilbert space such that there exists a minimal positive value $\b$  in $\s(D)$ and $y$ a bounded  operator on $H.$ If $y$ is weakly $D-$differentiable, then it is weakly $\widetilde{\log}(D)-$dif-ferentiable and 
 \begin{itemize}
 \item[(i)]$\, \, $ If $D$ is invertible then
 $$ \|[\widetilde{\log}(D),y] \| \leq \bigg(\frac{1}{\b}\bigg)^{(1/3)}\bigg(8\|y\| + 5\|\d(y)\|\bigg).$$ 
\item[(ii)] $\, $ If $D$ is not invertible then 
  $$ \|[\widetilde{\log}(D),y]\| \leq   \big(8(1/\b)^{(1/3)}+ |\log(\b)|\big)\|y\|  + 5(1/\b)^{(1/3)}\|[D,y]\| .$$ \,  \end{itemize} 
 \end{thm}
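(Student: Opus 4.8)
The plan is to derive Theorem \ref{logNinv} as a direct corollary of Proposition \ref{gbD} together with the relation (\ref{E0}). The key observation is that $\widetilde{\log}(D)$ and $g_\b(D)$ differ by the bounded operator $\log(\b)E_0$, where $E_0$ is the spectral projection onto $\ker D$, so commutators with the two operators differ by a bounded and easily controlled term.

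For part (i), when $D$ is invertible we simply note that $E_0 = 0$, hence $g_\b(D) = \widetilde{\log}(D)$ by the very definition of $\widetilde{\log}$, and the estimate is exactly the conclusion of Proposition \ref{gbD}. No further work is needed.

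For part (ii), I would argue as follows. First, since $g_\b(t)$ is bounded on $[0,\b]$ and agrees with $\log t$ for $t \geq \b$, the operator $g_\b(D)$ makes sense and $D + \b E_0$ is positive with smallest positive spectral value $\b$; weak $D$-differentiability of $y$ is equivalent to weak $(D+\b E_0)$-differentiability of $y$ because $\b E_0$ is bounded (indeed $E_0$ is a function of $D$, so $d(y) = \overline{d}(y) + [m(\b E_0), m(y)]$ in the notation of Section \ref{cam} and the extra term is bounded by $\b\|y\|$). In fact the cleanest route is to invoke Proposition \ref{gbD} directly, which already tells us that weak $D$-differentiability of $y$ implies weak $g_\b(D)$-differentiability and gives the bound $\|[g_\b(D),y]\| \leq (1/\b)^{1/3}(8\|y\| + 5\|\d(y)\|)$. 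Then from (\ref{E0}) we get
\begin{align}
\|[\widetilde{\log}(D),y]\| &\leq \|[g_\b(D),y]\| + |\log(\b)|\,\|[E_0,y]\| \nonumber \\
&\leq (1/\b)^{1/3}\big(8\|y\| + 5\|\d(y)\|\big) + |\log(\b)|\,\|[E_0,y]\| \nonumber \\
&\leq (1/\b)^{1/3}\big(8\|y\| + 5\|[D,y]\|\big) + 2|\log(\b)|\,\|y\|, \nonumber
\end{align}
using $\|[E_0,y]\| \leq 2\|y\|$ and $\|\d(y)\| = \|[D,y]\|$. This is slightly weaker than the stated bound, which has coefficient $|\log(\b)|$ rather than $2|\log(\b)|$ on $\|y\|$; to recover the sharper constant one uses that $[E_0,y] = E_0 y(I-E_0) - (I-E_0)y E_0$ is a sum of two operators with orthogonal ranges and orthogonal domains, so its norm equals $\max(\|E_0 y(I-E_0)\|, \|(I-E_0)yE_0\|) \leq \|y\|$, giving $\|[E_0,y]\| \leq \|y\|$ and hence exactly $\big(8(1/\b)^{1/3} + |\log(\b)|\big)\|y\| + 5(1/\b)^{1/3}\|[D,y]\|$.

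The main obstacle — really the only point requiring care — is the transfer of weak differentiability statements across the bounded perturbation $\log(\b)E_0$ and the verification that $\widetilde{\log}(D)$, $g_\b(D)$, and $E_0$ are genuinely simultaneous Borel functions of $D$ so that the matrix formalism of Section \ref{cam} applies and the commutators decompose additively; once that is in place the norm estimate is a one-line application of the triangle inequality and the sharp bound $\|[E_0,y]\| \leq \|y\|$ for a self-adjoint projection $E_0$.
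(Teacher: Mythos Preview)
Your proposal is correct and follows essentially the same route as the paper: part (i) is immediate from Proposition \ref{gbD} since $g_\b(D)=\widetilde{\log}(D)$ when $D$ is invertible, and part (ii) combines Proposition \ref{gbD} with the identity (\ref{E0}) and the sharp bound $\|[E_0,y]\|\leq\|y\|$ obtained from the off-diagonal decomposition $[E_0,y]=E_0y(I-E_0)-(I-E_0)yE_0$. The detour through $D+\b E_0$ is unnecessary (as you yourself note), but otherwise your argument matches the paper's almost line for line.
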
 

\begin{proof}
The case (i) follows directly from Proposition \ref{gbD} since, when $D$ is invertible, $g_\b(D) = \widetilde{\log}(D).$ 
 To settle (ii). let us recall,  that for any bounded operator $y$ we have $$ [E_0,y] = E_0y(I-E_0) - (I-E_0)yE_0,$$ so $\|[E_0, y]\| \leq \|y\|,$ hence a combination of  the  equation (\ref{E0}) and Proposition \ref{gbD} implies the second estimate, so the theorem follows.
\end{proof}

It may be  worth to notice that when  $D$ is invertible, then  we get for any positive real $s,$ that $\widetilde{\log}(sD) = \log(s)I + \widetilde{\log}(D),$ so for $s > 0$ we have 
\begin{align}\|[\widetilde{\log}(D), y ]\| &=  \|[\widetilde{\log}(sD), y ]\|
& \leq \bigg(\frac{1}{s\b}\bigg)^{(1/3)}\bigg(8\|y\| + s(5\|\d(y)\|) \bigg). \label{infs}
\end{align} 

The minimum of the right hand side of (\ref{infs}) over $ s> 0$ is  $\b^{-(1/3)}*12*(5/4)^{(1/3)}*\|y\|^{(2/3)}\|\d(y)\|)^{(1/3)},$ so we have proven the following result.

\begin{cor}
If $D$ is invertible then 
$$\|[\widetilde{\log}(D),y]\|  \leq 13 \b^{-(1/3)} \|y\|^{(2/3)}\|\d(y)\|^{(1/3)}.$$
\end{cor}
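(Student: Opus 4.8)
The plan is to optimize the scaling inequality (\ref{infs}) over $s > 0$, which is the one remaining ingredient; everything else is already contained in Theorem \ref{logNinv}(i) together with the observation that $\widetilde{\log}(sD) = \log(s)I + \widetilde{\log}(D)$ for invertible $D$ and $s>0$. The key point is that adding the scalar multiple $\log(s) I$ of the identity does not change the commutator, since $[\log(s) I, y] = 0$, so $\|[\widetilde{\log}(D),y]\| = \|[\widetilde{\log}(sD),y]\|$ for every $s>0$, and we are free to minimize the right-hand side of the estimate we get by applying part (i) to the operator $sD$ (whose smallest positive spectral value is $s\b$).

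First I would write down the function to be minimized. From (\ref{infs}) this is
$$ \varphi(s) := \big(s\b\big)^{-1/3}\Big(8\|y\| + 5s\,\|\d(y)\|\Big) = \b^{-1/3}\Big(8\|y\|\,s^{-1/3} + 5\|\d(y)\|\,s^{2/3}\Big), \qquad s>0. $$
If $\d(y) = 0$ the commutator $[\widetilde{\log}(D),y]$ vanishes (letting $s\to\infty$) and the claimed bound holds trivially, so I may assume $\|\d(y)\|>0$ and $\|y\|>0$. Differentiating, $\varphi'(s) = \b^{-1/3}\big(-\tfrac{8}{3}\|y\|\,s^{-4/3} + \tfrac{10}{3}\|\d(y)\|\,s^{-1/3}\big)$, which vanishes at $s_0 = \tfrac{4\|y\|}{5\|\d(y)\|}$, and this is a minimum since $\varphi(s)\to\infty$ as $s\to 0^+$ and as $s\to\infty$. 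Substituting $s_0$ back and collecting powers gives
$$ \varphi(s_0) = \b^{-1/3}\cdot C \cdot \|y\|^{2/3}\|\d(y)\|^{1/3}, $$
where $C$ is an explicit constant of the form $8\big(\tfrac{4}{5}\big)^{-1/3} + 5\big(\tfrac{4}{5}\big)^{2/3} = \big(\tfrac{5}{4}\big)^{1/3}\big(8\cdot\tfrac{5}{4}^{\,0} + 5\cdot\tfrac{4}{5}\big)$; a short computation shows $C = 12\,(5/4)^{1/3}$, exactly the value quoted in the paragraph preceding the corollary statement.

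The only thing left is to bound $C = 12(5/4)^{1/3}$ by the clean constant $13$. Since $(5/4)^{1/3} = 1.25^{1/3}$ and $1.08^3 = 1.259712 > 1.25$, we have $(5/4)^{1/3} < 1.08$, hence $C < 12\cdot 1.08 = 12.96 < 13$. Finally I would replace $\|\d(y)\|$ by the displayed quantity; note that $\d(y)$ and $[D,y]$ agree up to the harmless normalization $\d(y)\big|\mathrm{dom}(D) = i[D,y]$ so $\|\d(y)\| = \|[D,y]\|$ when $y\in C^1(D)$, which is the hypothesis. This yields
$$ \|[\widetilde{\log}(D),y]\| \le \varphi(s_0) < 13\,\b^{-1/3}\|y\|^{2/3}\|\d(y)\|^{1/3}, $$
as claimed. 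There is no real obstacle here — the argument is a one-variable calculus minimization — but the one point requiring a little care is checking that the substitution of the optimal $s_0$ really collapses to the product $\|y\|^{2/3}\|\d(y)\|^{1/3}$ with the constant $12(5/4)^{1/3}$, and that this constant is genuinely below $13$ rather than merely close to it.
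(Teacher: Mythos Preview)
Your proof is correct and follows exactly the paper's approach: optimize the right-hand side of (\ref{infs}) over $s>0$, identify the minimum as $12(5/4)^{1/3}\b^{-1/3}\|y\|^{2/3}\|\d(y)\|^{1/3}$, and bound this by $13\b^{-1/3}\|y\|^{2/3}\|\d(y)\|^{1/3}$. You supply more detail than the paper (the calculus for $s_0$, the edge case $\d(y)=0$, and the numerical check $12(5/4)^{1/3}<12.96<13$), but the argument is the same.
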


Since $g^\prime_\b(t)$ is in $L^p(\br)$ for any $p > 1$ we could have chosen to optimize over $p > 1$ too, but we have decided to keep this article at a reasonable length.  

\subsection{ Norms of commutators   $[|D|, y].$ } \label{[|D|,y]}
The Theorem \ref{HoldThm} may be applied to the function $|t| $ which is $(1,1)$ H\"older continuous and then it is $(1,0,1)$ H\"older bounded, so we get right away the following result 

\begin{pro} \label{|D|}
Let $y $ be in $C^2(D)$ then $y $ is in $C^1(|D|)$ and $$
\|\d_{|D|}(y)\| \leq 4\|y\| + 4\|\d_D(y)\| + 2\|\d_{D}^2(y)\|.$$
\end{pro}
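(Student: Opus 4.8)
The plan is to apply Theorem \ref{HoldThm} directly to the Borel function $g(t) = |t|$. First I would verify the hypotheses: the function $|t|$ satisfies the $(\a, A, B)$ H\"older boundedness condition of Definition \ref{HoldBd} with $\a = 1$, $A = 0$, $B = 1$, since $\big||s| - |t|\big| \leq |s - t|$ for all real $s,t$ by the reverse triangle inequality. With $\a = 1$, the smallest integer $n$ with $n > \a + 1/2 = 3/2$ is $n = 2$.

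Next I would simply substitute these values into the conclusion of Theorem \ref{HoldThm}. The hypothesis ``$y$ is $n$-times weakly $D$-differentiable'' for $n = 2$ is exactly the statement $y \in C^2(D)$, and the conclusion ``$y$ is weakly $g(D)$-differentiable'' becomes ``$y \in C^1(|D|)$'', with the weak $|D|$-derivative being $\d_{|D|}(y)$. The displayed bound in Theorem \ref{HoldThm} with $A = 0$, $B = 1$, $\a = 1$, $n = 2$ reads
\begin{equation*}
\|[g(D), y]\| \leq 2(0 + 1)\bigg(\|y\| + \sqrt{\tfrac{2 - 1}{4 - 2 - 1}} \sum_{k=0}^2 \binom{2}{k} \|\d^k(y)\|\bigg).
\end{equation*}
Here $\sqrt{(n - \a)/(2n - 2\a - 1)} = \sqrt{1/1} = 1$, and $\sum_{k=0}^2 \binom{2}{k} \|\d^k(y)\| = \|y\| + 2\|\d(y)\| + \|\d^2(y)\|$ (using $\d^0(y) = y$ and the binomial coefficients $1, 2, 1$). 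Collecting terms gives $2\big(\|y\| + \|y\| + 2\|\d(y)\| + \|\d^2(y)\|\big) = 4\|y\| + 4\|\d(y)\| + 2\|\d^2(y)\|$, which is exactly the claimed inequality.

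I do not anticipate any real obstacle here; this proposition is a corollary of Theorem \ref{HoldThm} obtained by plugging in a specific function and its H\"older bound constants. The only mild point of care is the bookkeeping: confirming that $|t|$ is H\"older bounded with the stated constants (and noting it is even $(1,1)$ H\"older continuous, hence a fortiori $(1,0,1)$ H\"older bounded), checking the arithmetic $n = 2$, and evaluating the square-root factor and the binomial sum correctly. The notation should use $\d_{|D|}$ and $\d_D$ to distinguish derivatives with respect to $|D|$ and $D$, matching the statement, but the estimate from Theorem \ref{HoldThm} involves $\d = \d_D$ throughout its right-hand side, so the final inequality is as stated.
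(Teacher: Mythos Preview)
Your proposal is correct and matches the paper's approach exactly: the paper simply remarks that $|t|$ is $(1,1)$ H\"older continuous, hence $(1,0,1)$ H\"older bounded, and states the proposition as an immediate consequence of Theorem~\ref{HoldThm}. Your write-up just fills in the arithmetic ($n=2$, the square-root factor equals $1$, and the binomial sum) that the paper leaves implicit.
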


This result shows by induction, that for any natural number $n$ there must exist positive reals $A_0, \dots , A_{2n} $ such that if $y $ is in $C^{2n}(D)$ then $y$ is in $C^n(|D|) $ and 
$$\|\d_{|D|}^n(y)\| \leq A_0\|y\| + A_1\|\d_D(y)\| + \dots + A_{2n}\|\d_D^{2n}(y)\|.$$

However, refinements of the methods used in the proof of Theorem  \ref{HoldThm},
show that we can do much better. We have obtained several inequalities, which are not easily directly comparable, by changing the grid length in the construction of $\overline{D}.$  If the grid length is kept as $1,
$ then a direct application of the methods from above shows that we can obtain   
 \begin{thm}  \label{k+1}
Let $D$ be an unbounded operator on a Hilbert space $H,$  $y$ a bounded operator on $H$ and $n$ a natural number. If $y$ is $n+1$ times weakly $D-$differentiable, then $y$ is $n$ times weakly $|D|-$differentiable and 

\begin{equation}
 \|\d_{|D|}^n(y) \|   
\leq  2^n \frac{\pi}{\sqrt{3}}\|y\| +  \frac{\pi}{\sqrt{3}}\sum_{l=1}^{n+1}\binom{n+1}{l}2^{(n+1-l)}\|\d_D^l(y)\|. \nonumber
\end{equation}   
\end{thm}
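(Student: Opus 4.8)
The plan is to repeat the architecture of the proof of Theorem \ref{HoldThm}, but with two modifications tailored to the specific function $g(t)=|t|$: first, exploit that $|t|$ is honestly $(1,1)$ H\"older continuous (so $A=0$, $B=1$ in the notation of Definition \ref{HoldBd}), which removes the $A$-term and lets us keep only one power of $(i-j)$ in the denominator of the Schur multiplier; second, instead of demanding $n>\alpha+1/2$ we will work with the Schur matrix $S_{ij}=\bigl(|i|-|j|\bigr)/(i-j)^{n+1}$ on the off-diagonal, so that $|S_{ij}|\le |i-j|/|i-j|^{n+1}=|i-j|^{-n}$, giving a convergent row sum already for $n\ge 1$. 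Concretely, I would introduce the discrete approximant $\overline D$ from (\ref{Dbar}) and the bounded perturbation $b=D-\overline D$ with $\|b\|\le 1/2$ from (\ref{b}), note $\overline D$ is positive away from a bounded perturbation so that $|\overline D|$ makes sense as a function of $\overline D$ with discrete spectrum, and observe — this is where the choice $g=|\cdot|$ matters — that on $\cam_D$ the matrix of $\bigl[m(|\overline D|),x\bigr]$ equals the Schur product $S * \overline d^{\,n+1}(x)$, where $\overline d$ is the $\overline D$-commutator from (\ref{f}) and $S$ is the matrix just described.

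The first step of the estimate is the row-norm bound for $S$. Using $|S_{ij}|\le |i-j|^{-n}$ one gets, as in (\ref{gsum}),
\begin{equation}
\sum_{j\in\bz}|S_{ij}|^2 \le 2\sum_{k=1}^\infty k^{-2n} \le 2\,\zeta(2n),
\end{equation}
and for $n=1$ this is $2\zeta(2)=\pi^2/3$, so $\|S\|_r\le \pi/\sqrt3$; for $n\ge 2$ one has $\zeta(2n)\le\zeta(2)$ so the same bound $\pi/\sqrt3$ works uniformly. Then Corollary \ref{schurcor} yields $\bigl\|[m(|\overline D|),m(y)]\bigr\|\le (\pi/\sqrt3)\,\|\overline d^{\,n+1}(m(y))\|$. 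Second, one converts $\overline d^{\,n+1}$ back to $d$-commutators exactly as in the proof of Theorem \ref{HoldThm}: since $\overline d = d - f$ with $\|f(x)\|\le\|x\|$ and $f,d,\overline d$ pairwise commuting, the binomial expansion $\overline d^{\,n+1}=\sum_{l=0}^{n+1}\binom{n+1}{l}(-f)^{n+1-l}d^{\,l}$ gives
\begin{equation}
\|\overline d^{\,n+1}(m(y))\| \le \sum_{l=0}^{n+1}\binom{n+1}{l}\|\d_D^{\,l}(y)\|,
\end{equation}
where the $l=0$ term is $\binom{n+1}{0}\|y\|$. Third, one estimates $\|[m(|D|),m(y)]-[m(|\overline D|),m(y)]\|$ by the functional-calculus bound $\bigl\| |D|-|\overline D| \bigr\|\le\|b\|\le 1/2$ (here one uses that $|\cdot|$ is $1$-Lipschitz, so $\bigl\||D|-|\overline D|\bigr\|\le\|D-\overline D\|$), which contributes $2\cdot\tfrac12\|y\|=\|y\|$ to $\|\d_{|D|}(y)\|$ after the usual commutator bound. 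Assembling the three pieces along the lines of (\ref{finholder}) gives boundedness of $[m(|D|),m(y)]$, hence weak $|D|$-differentiability of $y$, plus the stated inequality — after noticing that the crude Lipschitz estimate $\|y\|$ for the perturbation term must in fact be sharpened: since the extra factor $2^n$ and the $2^{n+1-l}$ weights in the claimed bound come from a more careful bookkeeping of the powers of $f$ together with an additional rescaling, I expect the actual argument keeps track of the constant $(1+\|b\|)=3/2$ versus $2$ and of the fact that the $n$-th derivative (not the $(n{+}1)$-st) is being bounded, which is what produces the asymmetric $2^n$ in front of $\|y\|$ and $2^{n+1-l}$ inside the sum.

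The main obstacle is getting the constants exactly as stated, in particular the factor $2^{n}$ multiplying $\|y\|$ and the weights $2^{n+1-l}$ on $\|\d_D^{\,l}(y)\|$: the clean version of the argument above produces plain binomial coefficients, so the stated inequality must arise from a slightly different splitting — presumably writing $|D| = |\overline D| + (\,|D|-|\overline D|\,)$ and expanding the resulting commutator in powers of the bounded piece $m(|D|-|\overline D|)$ (whose norm is $\le 1/2$, but in the estimate one replaces $1/2$-powers by the worst-case factor $2$ after combining with the $\overline D$-to-$D$ conversion), so that each of the $n+1$ levels contributes a factor $2$. I would therefore spend the bulk of the write-up on this combinatorial/constant-tracking step, treating the row-norm computation and the Schur-product application (Corollary \ref{schurcor}) as routine, and flagging that the phrase "a direct application of the methods from above" in the statement is doing real work precisely in the bookkeeping of these powers of two.
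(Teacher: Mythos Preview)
Your proposal has a structural gap: as written, it only bounds the \emph{first} $|D|$-derivative, not the $n$-th. Your Schur identity $[m(|\overline D|),x] = S*\overline d^{\,n+1}(x)$ with $S_{ij}=(|i|-|j|)/(i-j)^{n+1}$ is correct, but it expresses a \emph{single} commutator with $|\overline D|$ in terms of $n+1$ commutators with $\overline D$; applying Corollary~\ref{schurcor} then yields a bound on $\|\d_{|\overline D|}(y)\|$, not on $\|\d_{|\overline D|}^n(y)\|$. The three-step assembly you describe therefore reproduces (essentially) Proposition~\ref{|D|}, i.e.\ the case $n=1$, and does not address general $n$.

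The paper's proof works directly with the $n$-th power $d_{m(|D|)}^n$. Writing $|D|=|\overline D|+c$ with $\|c\|\le 1/2$ (this $c$ is a second bounded perturbation, distinct from $b$), one expands $d_{m(|D|)}^n=(d_{m(|\overline D|)}+d_{m(c)})^n$ binomially. Each resulting term carries a factor $d_{m(|\overline D|)}^k$ for some $0\le k\le n$, and since $(d_{m(|\overline D|)}^k x)_{ij}=(|i|-|j|)^k x_{ij}$, one needs a \emph{family} of Schur multipliers $S(k)_{ij}=(|i|-|j|)^k/(i-j)^{k+1}$, one for each $k\ge 1$, each with row norm $\pi/\sqrt3$. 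This converts $d_{m(|\overline D|)}^k$ into $F_k\, d_{m(\overline D)}^{k+1}$, and then a second binomial expansion $d_{m(\overline D)}^{k+1}=(d_{m(D)}-d_{m(b)})^{k+1}$ brings everything back to $D$-derivatives of order at most $n+1$. The powers of $2$ you were puzzled by drop out of this double sum: the $l=0$ contributions give the $2^n\|y\|$ term, and the remaining double sum $\sum_{k}\sum_{l}\binom{n}{k}\binom{k+1}{l}$ is rearranged and bounded by $\sum_{l}\binom{n+1}{l}2^{n+1-l}$. Your last-paragraph speculation about expanding in powers of $|D|-|\overline D|$ is on the right track, but it is the heart of the argument rather than a constant-tracking afterthought, and it forces the family $S(k)$ rather than the single multiplier $S$ you wrote down.
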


\begin{proof}
The arguments are similar to the ones used in the proof of  Theorem \ref{HoldThm}, so we will leave out a few  details. 
For a diagonal element $z$ in $\cam_D $ we will let $d_z$ denote the linear mapping on $\cam_D$ given by $d_z(x)_{ij} = z_{ii}x_{ij }- x_{ij}z_{jj}$. The operators $z$ we will use, will  all be of the form $z = m(f(D))$ for some Borel function $f(t),$ so they will all commute as operators on $\cam_D.$  We will recall the definitions of $\overline{D}$ and $b$ from  (\ref{Dbar}) and (\ref{b}), but we will  add the definition of $c$ as the bounded operator which satisfies $|D| = |\overline{D}| + c.$ Then $c$ satisfies  $\|c\| \leq 1/2.$ For each natural number $k$ we let $S(k):= (S(k)_{ij}) $ denote the matrix in $\cas$ which is given by $$S(k)_{ij} = \begin{cases} 0 \qquad \quad \text{ if } i =j \\ \frac{(|i|-|j|)^k}{(i-j)^{k+1}} \,\,  \, \,\text{ if } i \neq j \end{cases}. $$ 
Since $||i|-|j|| \leq |i-j|,$ the square of the row norm equals $2\sum_{\bn}j^{-2} = \pi^2/3.$ 
We will let $F_k$ denote the operator on $\cam_D$ which consists in Schur multiplication by $S(k),$ then we have the following sequence of identities 
\begin{align} 
&d_{m(|D|)}^n = (d_{m(|\overline{D}|)} + d_{m(c)})^n \nonumber \\    
= & d_{m(c)}^n + \sum_{k=1}^n \binom{n}{k}  d_{m(c)}^{n-k} d_{m(|\overline{D}|)}^k \nonumber \\
=& d_{m(c)}^n + \sum_{k=1}^n \binom{n}{k}  d_{m(c)}^{n-k} F_k d_{m(\overline{D})}^{k+1} \nonumber \\
=&  d_{m(c)}^n + \sum_{k=1}^n \binom{n}{k}  d_{m(c)}^{n-k} F_k\big( d_{m(D)} -d_{m(b)}\big)^{k+1} \nonumber \\
=&  d_{m(c)}^n + \sum_{k=1}^n \binom{n}{k}  d_{m(c)}^{n-k} F_k( -d_{m(b)})^{k+1}\nonumber \\+& \sum_{k=1}^n \sum_{l=1}^{k+1}\binom{n}{k}\binom{k+1}{l}  d_{m(c)}^{n-k} F_k(-d_{m(b)})^{(k+1-l)} d_{m(D)}^l   \nonumber \\
\end{align}

Since for a bounded operator $y$ we have $\|cy-yc\| \leq \|y\|,$ $\|by-yb\| \leq \|y\| $ and $\|F_k(m(y))\| \leq \pi/\sqrt{3}\|y\|$ we may start computing. So let $y$ be $n+1$ times weakly $D-$differentiable then the identities above show 
\begin{align}
& \|d^n_{m(|D|)}(y) \|  \leq 2^n\pi/\sqrt{3} \|y\| +  \pi/\sqrt{3}\sum_{k=1}^n \sum_{l=1}^{k+1}\binom{n}{k}\binom{k+1}{l}  \|\d^l(y)\| \nonumber \\
&= 2^n\pi/\sqrt{3} \|y\| +  \pi/\sqrt{3}\sum_{l=1}^{n+1} \sum_{k=l-1}^{n}\binom{n}{k}\binom{k+1}{l}  \|\d^l(y)\| \nonumber \\ 
&\leq  2^n\pi/\sqrt{3} \|y\| +  \pi/\sqrt{3}\sum_{l=1}^{n+1} \sum_{u=0}^{n+1-l}\binom{n+1}{l}\binom{n+1-l}{u}  \|\d^l(y)\|
 \nonumber \\ 
 & = 2^n\pi/\sqrt{3} \|y\| +  \pi/\sqrt{3}\sum_{l=1}^{n+1} \binom{n+1}{l} 2^{(n+1-l)}  \|\d^l(y)\|
 \nonumber \\ 
\end{align}  
and the theorem follows. 
\end{proof}

If the grid length is made smaller, say of size 1/2, then the powers of $2$ will disappear from the theorem above, but the row norm of $S(k)$ will double. We have left these computations out, partly because they are rather long, and partly because we do not know if there exists an optimal grid length.


\begin{thebibliography}{99.}%



\bibitem[1]{AMG}
W.~O.~Amrein, A.~Boutet de Monvel and V.~Georgescu. 
\newblock {\em C$_0-$Groups, commutator methods and spectral theory of N-body Hamiltonians. }
\newblock Birkh{\"a}user, 1996.  
  
 \bibitem[2]{GB} G.~Bennett.
\newblock {\em Schur multipliers. }
\newblock Duke Math. J. 44, 1977, 603 - 629.
  
\bibitem[3] {EC1} E.~Christensen.
\newblock {\em On weakly D-differentiable operators.}
\newblock  To appear in  Expo. Math., http://dx.doi.org/10.1016/j.exmath.2015.03.002
 
\bibitem[4]{EC2} E.~Christensen.
\newblock {\em Higher weak derivatives and reflexive algebras of operators.}
\newblock To appear in Contemp. Math. volume dedicated to R. V. Kadison, 2015. \\
http://arxiv.org/pdf/1504.03521.pdf
   
      
  
  
\bibitem[5]{AC} A.~Connes.
\newblock {\em Noncommutative geometry.}
\newblock Academic Press, 1994.

\bibitem[6]{ER} E.~G.~Effros and Z.-J.-~Ruan.
\newblock {\em Operator spaces.}
\newblock London Math. Soc. monographs 23, OIxford U. P., 2000.

\bibitem[7]{KR}
R.~V.~Kadison and J.~R.~Ringrose.
\newblock {\em Fundamentals of the theory of operator algebras.}
\newblock Academic Press, 1983.


\bibitem[8]{LT}
J.~Lindenstrauss and L.~Tzafriri. 
\newblock{\em Classical Banach spaces.}  \newblock Ergebnisse Math. 97, Springer 1979.

\bibitem[9]{JaN}
J.~van Neerven.
\newblock {\em The adjoint of a semigroup of linear operators.}
\newblock Springer, Lect.  Notes Math. 1529, 1992.


\bibitem[10] {JoN}
J.~von Neumann. 
\newblock {\em Mathematische Grundlagen der Quantenmechanik.}
\newblock Springer Verlag, 1932, 1968, 1996.


\bibitem[11]{VP}
V.~Paulsen. 
\newblock {\em Completely bounded maps and operator algebras.}
\newblock  Cambridge University Press, 2002.


\bibitem[12]{GKP}
G.~K.~Pedersen. 
\newblock {\em A commutator inequality.} In {\em Operator algebras, mathematical physics and low dimensional topology,} 233 - 235. ed. R.~Herman and B.~Tanbay.
\newblock  Res. Notes Math. 5., A. K. Peters USA, 1993.



\bibitem[13]{RP}
R.~S.~Phillips. 
\newblock {\em The adjoint  semi-group.}
\newblock  Pacific J. Math. 5, 1955, 269 -- 283.

\bibitem[14]{GP1}
G.~Pisier. 
\newblock {\em Similarity problems and completely bounded maps.}
\newblock Springer Lect. Notes  Math. 1618, ed. 2001. 
 
\bibitem[15]{GP2}
G.~Pisier. 
\newblock {\em Introduction to operator
space theory.}
\newblock London Mathematical Society Lecture Note Series. 294, 2003.


\end{thebibliography}
\end{document}